\patchcmd{\subsection}{-.5em}{.5em}{}{}
\newtheorem{theorem}{Theorem}[section]
\theoremstyle{plain}
\newtheorem{corollary}[theorem]{Corollary}
\newtheorem{definition}[theorem]{Definition}
\newtheorem{lemma}[theorem]{Lemma}
\newtheorem{remark}[theorem]{Remark}
\numberwithin{equation}{section}
\theoremstyle{plain}
\begin{document}

\begin{abstract} On a closed Riemannian manifold $(M^n ,g)$ with a proper isoparametric function $f$ we consider the
equation $\Delta^2 u -\alpha \Delta u +\beta u  = u^q$, where $\alpha$ and $\beta$ are positive constants 
 satisfying that
$\alpha^2 \geq 4 \beta$. We let ${\bf m}$ be the minimum of the dimensions of the focal varieties of $f$ and
$q_f = \frac{n-{\bf m}+4}{n-{\bf m}-4}$, $q_f = \infty$ if $n\leq {\bf m}+4$. We prove the 
existence of infinitely many nodal solutions of the equation assuming that $1<q<q_f$. The solutions are $f$-invariant.
To obtain the result, first we prove a $C^0-$estimate for positive $f$-invariant solutions of the equation. Then we prove the existence of mountain pass solutions with arbitrarily large energy.
\end{abstract}

\title[Nodal solutions to Paneitz-type equations]{Nodal solutions to Paneitz-type equations}

\author{ Jurgen Julio-Batalla}
\address{ Universidad Industrial de Santander, Carrera 27 calle 9, 680002, Bucaramanga, Santander, Colombia}
\email{ jajuliob@uis.edu.co}
\thanks{  The first author was supported by project 3756 of Vicerrector\'ia de Investigaci\'on y Extensi\'on of Universidad Industrial de Santander}

\author{Jimmy Petean}\thanks{The second author was supported by grant A1-S-45886 of Fondo Sectorial  SEP-CONACYT}  
\address{Centro de Investigaci\'{o}n en Matem\'{a}ticas, CIMAT, Calle Jalisco s/n, 36023 Guanajuato, Guanajuato, M\'{e}xico}
\email{jimmy@cimat.mx}

\maketitle

\section{Introduction}
Let $(M^n ,g)$  be a closed Riemannian manifold of dimension $n\geq 5$. We will consider the
{\it Paneitz type equation}

\begin{equation}\label{Paneitz}
\Delta^2 u -\alpha \Delta u +\beta u  = u^q  ,
\end{equation}

\noindent
 with $\alpha , \beta \in \mathbb{R}_{>0}$ and  $q>1$. In this article 
$\Delta= \Delta_g =div_g \nabla$ will denote  the non-positive
Laplace operator. We let
$p^* = (n+4)/(n-4)$. The equation (\ref{Paneitz})
is called critical if $q=p^*$, subcritical if  $q < p^*$ and supercritical if  $q > p^*$.

Paneitz type equations, in particular the critical equation,  appeared in the context of  the constant $Q$-curvature 
problem in conformal geometry. 
The $Q$-curvature of a Riemannian manifold, and the related Paneitz operator,  
were introduced by S. Paneitz \cite{Paneitz} and T. Branson \cite{Branson}
in their study of conformally invariant operators. It is defined by

\begin{equation}\label{Qcurv}
Q_g:=-\frac{1}{2(n-1)} \Delta_{g} sc_{g}-\frac{2}{(n-2)^{2}}\left| \mathrm{Ric}_{g}  \right|^{2}+\frac{n^{3}-4 n^{2}+16 n-16}{8(n-1)^{2}(n-2)^{2}} sc_{g}^{2},
\end{equation}

\noindent
where $sc_{g}$ is
the scalar curvature of $g$ and $\mathrm{Ric}_{g}$ is the Ricci tensor.

The Paneitz operator, $P_g$, is defined by 

\begin{equation}\label{opPan}
P_{g} \psi:=\Delta^{2} \psi+\frac{4}{n-2} \mathrm{div}_{g} \left(    \mathrm{Ric}_{g} \left(\nabla \psi, e_{i}\right) e_{i}\right)-\frac{n^{2}-4 n+8}{2(n-1)(n-2)} \mathrm{div}_{g}\left(sc_{g} \nabla \psi\right)+\frac{n-4}{2} Q_{g} \psi,
\end{equation}
 
\noindent
where $\{e_i\}_i$ is a $g-$orthonormal frame. The fundamental characteristic of the Paneitz operator is that it is conformally invariant: 
if we write a metric $h$ conformal  to $g$ as $h=u^{\frac{4}{n-4}} g$, for a positive function $u: M \rightarrow \mathbb{R}$, then

\begin{equation*}
P_{h} \psi=u^{-\frac{n+4}{n-4}} P_{g}(u \psi).
\end{equation*}
Then the expression for  the $Q$-curvature of $h$ in terms of 	$g$ and $u$  is given by:
\begin{equation*}
Q_{h}=\frac{2}{n-4} u^{-\frac{n+4}{n-4}} P_{g}(u).
\end{equation*}

It follows that  the problem of  finding a conformal metric $h=u^{\frac{4}{n-4}} g$ with constant $Q$-curvature 
$\lambda \in \mathbb{R}$ 
is equivalent to finding a positive solution $u$ of the fourth order {\it Paneitz-Branson equation}:
\begin{equation}\label{ConstantQcurv}
P_{g} u=\lambda u^{\frac{n+4}{n-4}}.
\end{equation}

It follows easily from a direct computation   that Einstein metrics have constant $Q$-curvature given by

$$Q_g = \frac{n^2 -4}{8n(n-1)^2} sc_{g}^2 .$$

Also note that if $(M,g)$ is an Einstein manifold with positive scalar curvature  then the Paneitz operator takes the form 

$$P_g (u) = \Delta^2 u -\alpha \Delta u +\beta u ,$$

\noindent
where $$\alpha=\dfrac{n^2-2n-4}{2n(n-1)}sc_g >0 ; \quad\beta=\frac{n-4}{2}Q_g >0.$$

We also point out  that in this case

$$\alpha^2 - 4\beta = \frac{4sc_{g}^2 }{(n-1)^2} >0.$$ 

\bigskip

The general Equation  (\ref{Paneitz}) was considered by Z. Djadli, E. Hebey and M. Ledoux in \cite{Hebey}, where
important results were obtained  in the critical case.  The study of positive
solutions of Equation (\ref{ConstantQcurv}), which is equivalent to the problem of finding conformal constant $Q$-curvature metrics, 
has received a lot of attention. About the problem of existence of positive solutions we mention for instance the
articles by F. Robert \cite{Robert}, J. Qing and D. Raske \cite{Qing}, by  M. J. Gursky, F. Hang and
Y.-J. Lin \cite{Gursky}, by M. J. Gursky and A. Malchiodi \cite{Malchiodi} and by F. Hang and  P. C. Yang
\cite{Hang, Yang}. Multiplicity of solutions and compactness of the space of positive solutions was treated 
for instance by E. Hebey and F. Robert in \cite{Hebey2}, by A.  Malchiodi in \cite{Malchiodi2},
by G. Li  in \cite{Li}, by Y. Y. Li and J. Xiong in  \cite{YanYanLi}, by R. Bettiol, P. Piccione and Y. Sire in
\cite{Bettiol} and by J. Wei and C. Zhao in \cite{Wei}.

\bigskip

In this article we  will assume that there exists an {\it isoparametric function} $f$ on $(M,g)$ and study solutions of Equation
(\ref{Paneitz}) which are constant along the level sets of $f$. We recall that a function $f:M \rightarrow [t_0 , t_1 ]$ is
called {\it isoparametric} if there are functions ${\bf a}, {\bf b} : [t_0 , t_1 ] \rightarrow \mathbb{R}$ such that 
$\| \nabla f \|^2 = {\bf a} \circ f$ and $\Delta f = {\bf b} \circ f$. Isoparametric functions on general Riemannian manifolds 
were introduced by Q-M.  Wang in \cite{Wang}, following the classical history of isoparametric functions on 
space forms. The simplest examples of isoparametric functions are functions invariant by a cohomogeneity-one
isometric action on $(M,g)$: these are called homogeneous isoparametric functions. But there are many examples of
isoparametric functions which are not homogeneous. The first examples were constructed by  H. Ozeki, M. Takeuchi
in \cite{Ozeki, Takeuchi}. These examples were later generalized by D. Ferus, H. Karcher and H. F. M\"{u}nzner
in \cite{Ferus}. In a more general setting, C. Qian and  Z. Tang proved in \cite{Qian} 
that given a Morse-Bott function
on a closed manifold $M$ with certain conditions on the set of critical points,  there is a Riemannian metric $g$ on $M$
such that $f$ is an isoparametric function on $(M,g)$. 

\medskip

There are many general results known about the structure of 
isoparametric functions on Riemannian manifolds. It was
proved by Q-M. Wang in \cite{Wang} that
the only critical values of an isoparametric function are its
minimum and its maximum, $t_0$ and $t_1$. The critical level sets $M_0 = f^{-1} (t_0 )$ and $M_1 = f^{-1} (t_1)$ are called
the {\it focal varieties} of $f$, and  it was also proved in \cite{Wang} that they are smooth
submanifolds of $M$. The isoparametric function is called {\it proper} if the level sets of
$f$ are connected, which is equivalent to saying that the dimension of both focal varieties is at least $n-2$. 
In this article we will use the following notation: for $i=0, 1$, we let 
$m_i$ be the dimension of $M_i$ and we denote by  ${\bf m}= \min \{ m_0 , m_1 \}$.

\medskip

Given an isoparametric function $f$ we will say that a function is {\it f-invariant} if it is constant on the level sets 
of $f$.
We will study  multiplicity results for  $f$-invariant solutions of Equation (\ref{Paneitz}).

We define the energy of a solution $u$ of  Equation (\ref{Paneitz}) as

$$E(u)=\int\limits_{M}|u|^{q+1}dv_g,$$
where $dv_g$ is the volume element of $(M,g)$.

\medskip

We  first prove that the space of positive, $f$-invariant solutions, is compact under a condition on the exponent $q$. 
As above, we let ${\bf m}= \min \{ m_0 , m_1 \}$ be the minimum of the
 dimension of the focal varieties, and we let $q_f = (n-{\bf m}+4)/(n-{\bf m}-4)$, if $n- {\bf m} -4 >0$,
$q_f = \infty$ otherwise.
Compactness implies in particular that there is an upper bound
for the energy of positive solutions. We prove: 

\begin{theorem}
Let $f$ be a proper isoparametric function on $(M,g)$. Assume that  $1<q<q_f$. Then for any $0 <a < b $, the set $S=S_{a , b} = \{(u,\alpha , \beta ) : u$ is a positive , $f$-invariant solution of Equation (\ref{Paneitz}) in $C^{4,\gamma}(M)$
and $\alpha , \beta  \in [a,b]$ , with $\alpha^2 \geq 4\beta  \} $ is compact.
\end{theorem}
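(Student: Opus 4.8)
The plan is to prove compactness by establishing a uniform $C^{4,\gamma}$ bound for all elements of $S$ and then invoking Arzel\`{a}--Ascoli together with elliptic regularity to pass to a convergent subsequence whose limit is again in $S$. The first and most substantial step is a uniform $L^\infty$ bound: I claim there is a constant $C=C(a,b,f,g,q)$ such that $\|u\|_{L^\infty(M)}\le C$ for every $(u,\alpha,\beta)\in S$. The hypotheses $\alpha,\beta\in[a,b]$ and $\alpha^2\ge 4\beta$ are exactly what allows one to factor $\Delta^2-\alpha\Delta+\beta = (-\Delta+\mu_1)(-\Delta+\mu_2)$ with $0<\mu_1\le\mu_2$ real (the roots of $x^2-\alpha x+\beta$), both lying in a compact subinterval of $(0,\infty)$ depending only on $a,b$. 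This factorization means positive solutions of (\ref{Paneitz}) can be studied via the second-order operator: writing $v=(-\Delta+\mu_2)u$, one has $(-\Delta+\mu_1)v=u^q\ge 0$, so by the maximum principle $v\ge 0$, hence $(-\Delta+\mu_2)u=v\ge 0$ gives $u>0$ automatically consistent, and more importantly $u\le \mu_1^{-1}\mu_2^{-1}$-type Green's function estimates reduce everything to the familiar second-order Paneitz/Yamabe-type blow-up analysis.

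The $C^0$-estimate itself I would prove by contradiction and blow-up, but crucially performed \emph{in the quotient}, i.e. using $f$-invariance to work on the one-dimensional reduced problem. If no uniform bound exists, take $(u_k,\alpha_k,\beta_k)\in S$ with $\|u_k\|_\infty\to\infty$, attained at points $p_k$; since $u_k$ is $f$-invariant, $p_k$ lies on a level set $f^{-1}(s_k)$ and by compactness of $M$ (and of $[a,b]$) we may assume $p_k\to p$, $\alpha_k\to\alpha_0$, $\beta_k\to\beta_0$ with $\alpha_0^2\ge 4\beta_0$, $\alpha_0,\beta_0\in[a,b]$. Rescaling $u_k$ around $p_k$ at rate $\|u_k\|_\infty^{-(q-1)/4}$, the fourth-order operator $\Delta^2-\alpha_k\Delta+\beta_k$ converges after rescaling to $\Delta_0^2$ on either $\mathbb{R}^n$ (if $p$ is a regular point of $f$, or more precisely the blow-up dimension is $n$) or on $\mathbb{R}^{n-\mathbf{m}}$ (if $p$ lies on the lower-dimensional focal variety, where $f$-invariance collapses the $\mathbf{m}$ tangent directions). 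The rescaled solutions converge in $C^4_{loc}$ to a nonnegative solution $U$ of $\Delta_0^2 U=U^q$ on $\mathbb{R}^N$ with $N\le n-\mathbf{m}$ and $1<q<q_f=(N+4)/(N-4)$; since this subcritical exponent on Euclidean space admits no positive solution (a Pohozaev/Liouville-type nonexistence theorem), we reach a contradiction, and the $C^0$-bound follows. The main obstacle is making the blow-up at focal-variety points rigorous: one must use that $f$-invariant functions descend to the orbit space $M/{\sim}\cong[t_0,t_1]$ and that near $M_i$ the metric has a precise normal-form (the level sets are tubes over $M_i$), so the reduced Laplacian has a Bessel-type singular coefficient whose rescaled limit is the flat Laplacian on $\mathbb{R}^{n-m_i}$; the exponent threshold $q_f$ is dictated precisely by this minimal blow-up dimension $n-\mathbf{m}$.

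Once the $L^\infty$-bound is in hand the rest is routine bootstrapping. From $\|u\|_\infty\le C$ and $\alpha,\beta\in[a,b]$, standard $L^p$-elliptic estimates for the fourth-order operator (again using the factorization into two uniformly elliptic second-order operators with coefficients in a compact set) give $\|u\|_{W^{4,p}}\le C_p$ for all $p<\infty$, hence $\|u\|_{C^{3,\gamma}}\le C'$ by Sobolev embedding; feeding this back, $u^q\in C^{3,\gamma}$ (here $1<q$ and $u\ge 0$ suffices for H\"older regularity of $u^q$ once $u\in C^1$), so Schauder estimates yield a uniform bound $\|u\|_{C^{4,\gamma}}\le C''$ depending only on $a,b,f,g,q$. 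Now given any sequence $(u_k,\alpha_k,\beta_k)\in S$, Arzel\`{a}--Ascoli extracts a subsequence with $u_k\to u_\infty$ in $C^4$, $\alpha_k\to\alpha_\infty$, $\beta_k\to\beta_\infty$; passing to the limit in the equation shows $u_\infty$ solves (\ref{Paneitz}) with parameters $\alpha_\infty,\beta_\infty$, it is $f$-invariant (a closed condition), nonnegative, and $\alpha_\infty^2\ge 4\beta_\infty$ with $\alpha_\infty,\beta_\infty\in[a,b]$; finally $u_\infty\not\equiv 0$ because the $C^0$-estimate applied to the constant-function obstruction — more carefully, because any nonnegative $f$-invariant solution is either $\equiv 0$ or strictly positive by the maximum principle applied to the factored operator, and a uniform \emph{lower} bound on the energy of nonzero solutions (from Sobolev inequality: $\|u\|_{q+1}^{q+1}=\int u^q\cdot u$ controls $\|u\|$ from below once $u\ne 0$) rules out collapse. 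Hence $u_\infty>0$ and $(u_\infty,\alpha_\infty,\beta_\infty)\in S$, proving $S$ is sequentially compact, hence compact.
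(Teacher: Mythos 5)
The high-level strategy is the same as the paper's: prove a uniform $C^0$-bound by a rescaling/blow-up contradiction, use the factorization $\Delta^2-\alpha\Delta+\beta=(-\Delta+c_1)(-\Delta+c_2)$ (with $c_1,c_2>0$ thanks to $\alpha^2\ge 4\beta$ and $\alpha,\beta\ge a>0$) for elliptic bootstrapping, then conclude by Arzel\`a--Ascoli and a lower bound ruling out collapse to zero. However, the core of the blow-up step has two genuine gaps.

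First, you invoke a general ``Pohozaev/Liouville-type nonexistence theorem'' asserting that $\Delta^2U=U^q$ has no nontrivial nonnegative solution on $\mathbb{R}^N$ when $q<(N+4)/(N-4)$. No such Liouville theorem is available at that level of generality for the biharmonic operator (the maximum principle fails for fourth-order operators, and the standard Pohozaev argument requires decay). What \emph{is} true --- and what the paper uses --- is the Lazzo--Schmidt theorem (\cite{subcritical}, Theorem 1.1): \emph{radial} solutions of the subcritical biharmonic equation must be oscillatory. The radiality is not incidental; it is exactly what the $f$-invariance plus Fermi coordinates around the focal variety deliver. In Fermi coordinates around $M_0$ the $f$-invariant function $u$ depends only on the $(n-m_0)$ normal coordinates through their Euclidean norm, so after rescaling one obtains a nonnegative \emph{radial} limit $w$ on $\mathbb{R}^{n-m_0}$ with $w(0)=1$ solving $\Delta_0^2 w=w^q$; Lazzo--Schmidt then gives the contradiction. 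Your write-up never records that the limit is radial, and without it the contradiction does not follow from any known result.

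Second, your treatment of the regular-point case is inconsistent and, as stated, fails. You say the blow-up dimension at a regular point is $n$, but $q<q_f=(n-\mathbf{m}+4)/(n-\mathbf{m}-4)$ is only subcritical for dimension $n-\mathbf{m}$; when $\mathbf{m}>0$ the exponent can be critical or supercritical with respect to dimension $n$, so no subcritical Liouville argument (even granting one) would apply on $\mathbb{R}^n$. The paper handles a regular point $p$ by an entirely different and more elementary route: near $p$ the distance to $M_0$ is a smooth function with unit gradient, the $f$-invariant solution reduces to a one-variable function $\phi$ satisfying the fourth-order ODE (\ref{PaneitzODE}), and after rescaling one obtains a nonnegative $\varphi$ on all of $\mathbb{R}$ with $\varphi(0)=1$ solving $\varphi''''=\varphi^q$, which is impossible for any $q>1$. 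So the effective blow-up dimension at a regular point is $1$, not $n$, and the nonexistence is elementary rather than a Liouville theorem. Your subsequent sentence writing $N\le n-\mathbf{m}$ and $q_f=(N+4)/(N-4)$ contradicts your earlier $N=n$ and is also an incorrect identity (it only holds when $N=n-\mathbf{m}$); the underlying point, that $q<(N+4)/(N-4)$ for the relevant $N$, is true but needs to be argued via the reduction described above. The remainder of your argument (bootstrapping to $C^{4,\gamma}$, passing to the limit, showing $u_\infty\not\equiv 0$, and positivity via the factored maximum principle) is essentially correct, though the paper obtains $u_\infty\not\equiv 0$ more directly from the observation that a positive solution has $\max u\ge\beta^{1/(q-1)}$.
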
\label{compactness}

Then we apply the classical theory of mountain pass critical points of A. Ambrosetti and P. H. Rabinowitz
\cite{Ambrosetti} to prove:

\begin{theorem}Let $f$ be a proper isoparametric function on $(M,g)$. Assume that  $1<q<q_f$.
Given $\alpha, \beta >0$ there is a sequence of solutions of Equation  (\ref{Paneitz})
with unbounded energy. 
\end{theorem}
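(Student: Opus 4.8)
The plan is to build an $f$-invariant variational framework and apply the symmetric mountain pass theorem. First I would reduce the PDE to a one-dimensional problem: since $f$ is proper and isoparametric, writing $t=f$, an $f$-invariant function $u$ becomes a function $v(t)$ on $[t_0,t_1]$, and the operators become $\Delta u = {\bf b}(t)\,v'(t) + {\bf a}(t)\,v''(t)$ and similarly for $\Delta^2$; the volume element decomposes as $dv_g = V(t)\,dt$ for an explicit density $V$ that vanishes like $\mathrm{dist}$ to a power related to the codimension of the focal varieties at the endpoints. The key point is that the energy functional
\begin{equation*}
I(u)=\frac12\int_M \big(|\Delta u|^2 + \alpha|\nabla u|^2 + \beta u^2\big)\,dv_g - \frac{1}{q+1}\int_M |u|^{q+1}\,dv_g
\end{equation*}
is well-defined and $C^1$ on the Hilbert space $H$ of $f$-invariant functions in $H^2(M)$, because the condition $\alpha^2\ge 4\beta$ makes the quadratic form $\int(|\Delta u|^2+\alpha|\nabla u|^2+\beta u^2)$ coercive and equivalent to the $H^2$-norm (it factors as a composition of two second-order operators with positive "masses"). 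Critical points of $I$ restricted to $H$ are, by the principle of symmetric criticality, genuine $f$-invariant solutions of Equation (\ref{Paneitz}).

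Next I would establish the compactness of the Palais--Smale sequences for $I$ on $H$. The functional has the standard mountain pass geometry: $I(0)=0$, $I$ is positive on a small sphere in $H$ (since $q+1>2$), and $I(t u_0)\to -\infty$ along any fixed $u_0\ne 0$. Because we have restricted to $f$-invariant functions, the relevant Sobolev embedding $H \hookrightarrow L^{q+1}$ is compact precisely when $q+1 < 2q_f+\cdots$, i.e.\ exactly under the hypothesis $q<q_f$ — this is where the "effective dimension" $n-{\bf m}$ enters, the symmetry boosting the subcritical range from $p^*$ up to $q_f$. Given this compact embedding, the Palais--Smale condition follows by the usual argument: a $(PS)_c$ sequence is bounded in $H$ (using $I(u_k)\to c$ and $I'(u_k)\to 0$ together with $q+1>2$), hence has a weakly convergent subsequence which, by compactness of the embedding, converges strongly in $L^{q+1}$, and then the coercivity of the quadratic part upgrades this to strong convergence in $H$.

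Finally, to get a sequence of solutions with \emph{unbounded} energy I would invoke the symmetric (equivariant) mountain pass theorem of Ambrosetti--Rabinowitz: the functional $I$ is even, $I(0)=0$, it satisfies $(PS)$, it satisfies the geometric conditions above, and $H$ is infinite-dimensional. The theorem then produces an unbounded sequence of critical values $c_k\to\infty$, obtained by a min-max over sets of Krasnoselskii genus $\ge k$; concretely, on any $k$-dimensional subspace $W_k\subset H$ the functional $I$ is bounded above and negative outside a ball, which forces $c_k\to\infty$. Translating critical values of $I$ into the energy $E(u)=\int_M|u|^{q+1}\,dv_g$ is immediate since at a critical point $I(u) = \big(\tfrac12-\tfrac1{q+1}\big)E(u)$, so $E(u_k)\to\infty$ as well, giving the claimed sequence of solutions of Equation (\ref{Paneitz}) with unbounded energy. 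The main obstacle — and the step requiring genuine care — is verifying that the $f$-invariant Sobolev embedding $H\hookrightarrow L^{q+1}$ is compact for all $q<q_f$; this rests on a precise analysis of the weight $V(t)$ near the focal varieties and a one-dimensional weighted Sobolev inequality, and it is exactly the place where Theorem \ref{compactness} and the present theorem share their core analytic input.
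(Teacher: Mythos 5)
Your proposal tracks the paper's argument quite closely: restrict the functional to $f$-invariant $H^2$-functions, establish a compact embedding into $L^{q+1}$ governed by the effective dimension $n-{\bf m}$, verify mountain pass geometry and Palais--Smale, invoke the symmetric Ambrosetti--Rabinowitz theorem to obtain an unbounded sequence of critical values, and convert to unbounded energy via the identity $I(u)=\big(\tfrac12-\tfrac1{q+1}\big)E(u)$ at critical points. That is the paper's route. Two points need correction.

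First, you assert that the hypothesis $\alpha^2\ge 4\beta$ is what makes the quadratic part of $I$ coercive and equivalent to the $H^2$-norm. That condition is not assumed in this theorem, and it is not needed: coercivity of $\int_M\big((\Delta u)^2+\alpha|\nabla u|^2+\beta u^2\big)dv_g$ holds whenever $\alpha,\beta>0$. The factorization into two second-order operators with positive ``masses'' (which does require $\alpha^2\ge4\beta$) is used only in the $C^0$-estimate of Theorem~1.1, not here; if you build it into the present theorem you obtain a strictly weaker statement than the one claimed.

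Second, your explanation of why the min-max levels $c_k\to\infty$ --- that $I$ is bounded above and negative outside a ball on each $k$-dimensional subspace, ``which forces $c_k\to\infty$'' --- is not a valid argument. Boundedness above on finite-dimensional subspaces guarantees only that the min-max levels are finite and that admissible odd maps exist; it gives no lower bound on $c_k$. This unboundedness is precisely the substantive part, and the paper reproves it: it introduces $d_m=\inf\{\int_M uP_{\alpha,\beta}(u)\,dv_g : u\in T\cap E_m^{\perp}\}$ over the Nehari-type set $T$, shows $d_m\to\infty$ using the compact embedding and coercivity, and constructs odd homeomorphisms $h_m$ that dilate $E_m^{\perp}$ by $\sqrt{d_m}/C$ to deduce $c_m\ge k d_m^2$. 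Citing the Ambrosetti--Rabinowitz theorem as a black box is acceptable, but the ``concretely'' sentence should be removed or replaced. A further, harmless, difference: for the compact embedding you propose a one-dimensional weighted Sobolev argument with volume density $V(t)$ vanishing to order $n-m_i-1$ near the focal varieties, whereas the paper decomposes $M$ into the two disk bundles $N_{\le a}M_0$, $N_{\le a}M_1$, covers them by trivializing charts, and reduces to the ordinary Sobolev embedding on $(n-{\bf m})$-dimensional Euclidean balls; both routes are viable, but the disk-bundle route avoids the weighted one-dimensional technicalities.
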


The previous results imply:

\begin{corollary}\label{infinite} Let $f$ be a proper isoparametric function on $(M,g)$. Assume that  $1<q<q_f$.
Given $\alpha, \beta >0$ with $\alpha^2 \geq 4\beta$,  there exist infinitely many nodal $f-$invariant solutions of 
Equation (\ref{Paneitz}).
\end{corollary}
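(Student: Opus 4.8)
The plan is to combine Theorem 1.1 (compactness of the set of positive $f$-invariant solutions) with Theorem 1.2 (existence of $f$-invariant solutions of unbounded energy) in the style of Ding's argument for the Yamabe-type equation: an unbounded sequence of solutions cannot consist only of positive solutions, nor only of negative solutions, so infinitely many of them must change sign. More precisely, fix $\alpha,\beta>0$ with $\alpha^2\geq 4\beta$. By Theorem 1.2 there is a sequence $(u_k)$ of $f$-invariant solutions of \eqref{Paneitz} with $E(u_k)=\int_M |u_k|^{q+1}\,dv_g \to \infty$. By elliptic regularity for the operator $\Delta^2-\alpha\Delta+\beta$ (which, because $\alpha^2\geq 4\beta$, factors as $(\Delta-r_1)(\Delta-r_2)$ with $r_1,r_2\le 0$, so it satisfies the comparison/regularity theory we need) each $u_k$ lies in $C^{4,\gamma}(M)$.

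The key step is to show that only finitely many of the $u_k$ can be of one sign. Suppose $u_k>0$ for infinitely many $k$. Then, applying Theorem 1.1 with this fixed pair $(\alpha,\beta)$ — take $a=b=\text{something}$, or rather $a$ slightly below and $b$ slightly above $\min\{\alpha,\beta\}$, $\max\{\alpha,\beta\}$ so that $(\alpha,\beta)\in[a,b]^2$ with $\alpha^2\ge 4\beta$ — the set $S_{a,b}$ is compact, hence bounded in $C^{4,\gamma}(M)$, hence $\sup_k E(u_k)<\infty$ along that subsequence, contradicting $E(u_k)\to\infty$. The same argument applied to $-u_k$ handles the case $u_k<0$ for infinitely many $k$: if $u$ solves \eqref{Paneitz} and $u<0$, then $v=-u>0$ solves $\Delta^2 v-\alpha\Delta v+\beta v = -(-v)^q$; one should check that the sign convention in \eqref{Paneitz} is odd in $u$ only when $q$ is... — in fact \eqref{Paneitz} is \emph{not} odd, so a negative solution $u$ gives $v=-u>0$ with $\Delta^2 v-\alpha\Delta v+\beta v=-v^q<0$, and then the maximum principle for $(\Delta-r_1)(\Delta-r_2)$ forces $v\le 0$, a contradiction; hence there are \emph{no} strictly negative solutions at all. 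Therefore, discarding the finitely many positive ones, all remaining $u_k$ must vanish somewhere while being nonzero (since $E(u_k)\to\infty>0$), i.e. they are nodal.

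Finally, to get \emph{infinitely many} nodal solutions rather than just one, note that if the collection $\{u_k\}$ contained only finitely many \emph{distinct} functions, then $E(u_k)$ would take only finitely many values and could not tend to infinity; so $\{u_k\}$ contains infinitely many distinct $f$-invariant solutions, and all but finitely many are nodal. This proves the corollary.

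The main obstacle I anticipate is the sign analysis: one must be careful that "solution" in Theorems 1.1 and 1.2 is interpreted consistently (weak solutions in the natural Sobolev space $H^2_f$ of $f$-invariant functions, which by the quoted $C^0$-estimate and regularity are classical), and that the maximum principle argument ruling out negative solutions genuinely uses $\alpha^2\ge 4\beta$ via the factorization $(\Delta-r_1)(\Delta-r_2)$ with real nonpositive roots $r_1,r_2$ — this is exactly the hypothesis that is imposed, so the mechanism is available, but the bookkeeping of which reductions preserve $f$-invariance and the $[a,b]$-range in Theorem 1.1 needs to be done cleanly.
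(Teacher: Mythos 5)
Your overall strategy is exactly the intended one: Theorem 1.2 produces a sequence of $f$-invariant solutions with energy tending to infinity, Theorem 1.1 shows positive solutions have uniformly bounded energy (take any $0<a\le \min\{\alpha,\beta\}$, $\max\{\alpha,\beta\}\le b$, so $(\alpha,\beta)\in[a,b]^2$ and $S_{a,b}$ is compact), hence only finitely many of the $u_k$ can be positive; since $E(u_k)\to\infty$ forces infinitely many distinct $u_k$, the conclusion follows once strictly negative solutions are controlled. The paper itself leaves the corollary as an immediate consequence, so the structure you propose is the right one.

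However, your handling of the negative case is incorrect. In the variational framework of Section 3 the functional is built from $|u|^{q+1}$, so its critical points (and thus the solutions produced by Theorem 1.2, including nodal ones) satisfy $\Delta^2 u-\alpha\Delta u+\beta u=|u|^{q-1}u$; the nonlinearity $u^q$ in Equation \eqref{Paneitz} is to be read this way, since $u^q$ is not even defined for $u<0$ and non-integer $q$. With this reading the equation \emph{is} odd in $u$: if $u$ solves it then so does $-u$, and $E(u)=E(-u)$. So there is no maximum-principle contradiction to extract, and your claim that ``there are no strictly negative solutions at all'' is false (the negatives of the constant positive solution, for instance, are negative solutions). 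Your sign computation $-(-v)^q=-v^q$ is also only valid when $q$ is an even integer; for odd integer $q$ you get $+v^q$ and the alleged contradiction evaporates, which is precisely the oddness you noticed and then dismissed.

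The fix is simpler than what you attempted: since $E(-u)=E(u)$ and $-u>0$ whenever $u<0$, Theorem 1.1 bounds the energy of negative solutions exactly as it bounds positive ones. Therefore at most finitely many $u_k$ are signed (positive or negative), and, as you correctly argue from $E(u_k)\to\infty$, infinitely many distinct $u_k$ remain, all of which are nodal. With this correction the proof is complete and coincides with the paper's intended argument.
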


The existence of infinite nodal solutions to the critical equation in the round sphere was proved 
by  T. Bartsch, M. Schneider and T. Weth in 
\cite{Bartsch} and by N. Saintier in \cite{Saintier}, also
using the mountain pass theorem. In this case the fact that infinitely many of the solutions are nodal follows
from the classification of positive solutions of the Paneitz-Branson equation on the sphere obtained by
C.-S. Lin in \cite{Lin} and by J. Wei and X. Xu in \cite{class}.

\medskip

Note that if ${\bf m}>0$ then $q_f >p^*$ and the previous results apply to critical and supercritical equations. 
In particular, if $(M,g)$ 
is a positive Einstein manifold which admits an isometric cohomogeneity one action with all orbits of positive dimension,
then the previous results apply to the Paneitz-Branson equation on $(M,g)$. Therefore as
a corollary we have:

\begin{corollary}
If $(M,g)$ is a positive Einstein manifold with a cohomogeneity one action with positive dimensional orbits
then the Paneitz-Branson equation (\ref{ConstantQcurv}) on $(M,g)$ has infinitely many  nodal solutions.
\end{corollary}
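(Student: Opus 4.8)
The plan is to deduce the statement from Corollary \ref{infinite}, by verifying that its hypotheses hold for the Paneitz--Branson equation on such an $(M,g)$ with exponent $q=p^*$.

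First recall that on an Einstein manifold with $sc_g>0$ the Paneitz operator equals $P_g u=\Delta^2 u-\alpha\Delta u+\beta u$ with $\alpha=\frac{n^2-2n-4}{2n(n-1)}sc_g$ and $\beta=\frac{n-4}{2}Q_g$, and that $\alpha^2-4\beta=\frac{4sc_g^2}{(n-1)^2}$. Since $n\geq 5$ we have $n^2-2n-4>0$ and $Q_g=\frac{n^2-4}{8n(n-1)^2}sc_g^2>0$, so $\alpha,\beta>0$ and $\alpha^2>4\beta$. Hence the equation $P_g u=u^{p^*}$ is exactly Equation (\ref{Paneitz}) with $q=p^*=(n+4)/(n-4)$ and with coefficients satisfying $\alpha^2\geq 4\beta$; note also that $p^*>1$. (A solution of the Paneitz--Branson equation $P_g u=\lambda u^{p^*}$ with $\lambda>0$ is then obtained from a solution of $P_g u=u^{p^*}$ via the rescaling $u\mapsto\lambda^{1/(1-p^*)}u$, which does not change the nodal set.)

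Second, a cohomogeneity one isometric action of a group $G$ on $(M,g)$ produces a homogeneous isoparametric function $f$: identifying the orbit space with an interval $[t_0,t_1]$ and composing the quotient map with an appropriate reparametrization, one obtains $f\colon M\to[t_0,t_1]$ with $\|\nabla f\|^2={\bf a}\circ f$ and $\Delta f={\bf b}\circ f$, whose level sets are the $G$-orbits. We may assume $G$ connected, so the orbits, hence the level sets of $f$, are connected and $f$ is proper; its focal varieties $M_0$ and $M_1$ are the two singular orbits. By hypothesis these have positive dimension, so ${\bf m}=\min\{m_0,m_1\}>0$, and therefore, since $t\mapsto(t+4)/(t-4)$ is decreasing for $t>4$,
\[
q_f=\frac{n-{\bf m}+4}{n-{\bf m}-4}>\frac{n+4}{n-4}=p^*
\]
(interpreting $q_f=\infty$ when $n-{\bf m}\leq 4$). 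Thus $1<p^*<q_f$, and Corollary \ref{infinite} applied with $q=p^*$ yields infinitely many nodal, $f$-invariant solutions of $P_g u=u^{p^*}$, hence of the Paneitz--Branson equation.

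Being a corollary, this argument has no real obstacle; the only point that is not completely immediate is the assertion that the function associated to a cohomogeneity one action is, after reparametrization, a \emph{proper} isoparametric function in the sense used here. This is classical and is precisely what singles out the homogeneous examples of isoparametric functions, the connectedness of the level sets coming from the connectedness of the group orbits.
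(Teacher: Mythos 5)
Your argument is correct and follows the same route the paper intends: read off the coefficients $\alpha,\beta$ of the Paneitz operator on a positive Einstein manifold (the paper already records that $\alpha,\beta>0$ and $\alpha^2-4\beta=\tfrac{4\,sc_g^2}{(n-1)^2}>0$), observe that the cohomogeneity-one action provides a proper isoparametric function with $\mathbf{m}>0$ so that $q_f>p^*$, and invoke Corollary~\ref{infinite} with $q=p^*$. The only extra detail you add beyond the paper's one-line deduction is the harmless rescaling remark to absorb the constant $\lambda$, which is fine.
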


This is in contrast with the result of J. Vetois in \cite{Vetois}, that if $(M,g)$ is a closed Einstein manifold
with positive scalar curvature, 
different from the round sphere, then
the only positive solutions of the Paneitz-Branson equation on $(M,g)$ are the constant functions.

\bigskip

In Section 2 we will discuss general facts on isoparametric functions and prove Theorem 1.1. In Section 3 we
will apply the theory of mountain pass critical points to prove Theorem 1.2.

\section{Isoparametric functions and compactness}

Let $(M^n , g)$ be a closed Riemannian manifold of dimension $n$ and assume that there is a proper isoparametric
function $f:M \rightarrow [t_0 , t_1 ]$. 
In this section we will discuss solutions of the Paneitz-type equation (\ref{Paneitz}) which are constant on the 
level sets of $f$ and prove Theorem 1.1.

The classical theory of isoparametric functions on general Riemannian manifolds was introduced by Q-M. Wang in \cite{Wang}. 
It is known that an 
isoparametric function $f$ only has singular values at $t_0=\min f$ and $t_1=\max f$. 
The critical level sets $M_0 = f^{-1}(t_0)$ and $M_1 = f^{-1}(t_1)$ are smooth submanifolds, which are usually 
called the {\it focal varieties} of $f$.
Moreover, all the 
regular level sets of $f$, $f^{-1}(t)$ for $t\in (t_0 , t_1 )$, have constant mean curvature and
are smooth tubular hypersurfaces around 
each of the focal submanifolds $M_0$ and $M_1$. Let $d_g$ denote
the distance function on $(M,g)$. Let $D= d_g (M_0 , M_1 )$ be the distance between the focal varieties. 
We denote by ${\bf d}:M\rightarrow [0,D]$ the distance to  the  focal variety $M_0$, ${\bf d}(x) = d_g (x, M_0 )$. 
Note that it follows from
the previous comments that the level sets of $f$ and ${\bf d}$ are the same.

\begin{definition}
A function $u:M\rightarrow \mathbb{R}$ is called $f$-invariant if $u(x)=\phi({\bf d}(x))$ for some  function $\phi:[0,D ]\rightarrow \mathbb{R}$.
\end{definition}

Note that $f$-invariant functions are precisely the functions which are constant along the level sets of $f$, and that 
an $f$-invariant function $u$ is obviously determined by the corresponding function $\phi$ such that $u= \phi \circ 
{\bf d}$.

The Paneitz-type Equation (\ref{Paneitz}) for an $f$-invariant function $u= \phi \circ 
{\bf d}$ is equivalent to a fourth-order  ordinary differential equation on $\phi$. 
For $t\in (0,D)$ let $h(t)$ denote the (constant) mean curvature of ${\bf d}^{-1} (t)$. 
Then by a direct computation we have that for $x\in M-(M_0 \cup M_1 )$, 

$$\Delta (\phi \circ{\bf d} ) (x) = (\phi '' +h\phi ' )\circ {\bf d} (x) .$$

Then we can see (details can be found in  \cite[Lemma 2.4]{BQC}) that  $u$ solves the Paneitz type Equation
 (\ref{Paneitz}) if and only if $\phi$ solves

\begin{equation}\label{PaneitzODE}
\phi  '''' + 2h \phi  ''' +(2h' +h^2 -\alpha ) \phi '' +(h''+hh ' -\alpha h )\phi  ' + \beta \phi = \phi^ q .
\end{equation}

The equation is
considered in the open interval $(0,D)$. For the function $u$ to be defined on all of $M$, $\phi$ must solve the equation on $[0,D]$, with the
appropriate boundary conditions; but the boundary conditions will play no role in the present article.

\medskip

To study  Equation (\ref{Paneitz}) around the focal varieties of $f$ we will consider Fermi coordinates. Namely, 
we first consider geodesic coordinates $((x^1,\cdots,x^{m_0}),\Omega')$ centered at $p$ in $\Omega'$, where $\Omega'$ is an open subset
of $M_0$. And then we consider geodesic coordinates in the directions normal to $M_0$. Explicitly, we have
coordinates $((x^1,\cdots,x^n),\Omega)$ where  $(x^{m_0+1},\cdots,x^n)$ are defined as follows: for all $\bar{p}\in \Omega'$ and $r_{i's}$ small enough,
$$x^j\left(exp_{\bar{p}}\left(\sum\limits_{i=m_0+1}^{n}r_iE_i(\bar{p})\right)\right)=r_j,$$
where $E_{m_0+1},\cdots, E_n$ are orthonormal sections in normal direction of $M_0$ which are parallel with respect to the normal connection along geodesics in $M_0$ starting at $p$ in $\Omega'$. More details can be found
for instance in  \cite{Tubes}.

In these coordinates, the distance function $\textbf{d}:=d|_{\Omega}$ takes the form  $$\textbf{d}(x)=\sqrt{\sum_{i=m_0+1}^n(x^i)^2}.$$
Also, we have that $g(\partial x^i,\partial x^j)|_{\Omega'}=\delta_{ij}$ for $i,j\in\{m_0+1,\cdots,n\}.$

\bigskip

We are now ready to give the proof of Theorem 1.1:

\begin{proof}
We will prove first that functions in $S$ are uniformly $C^0$-bounded. Assume that there exists a sequence $U_i=(u_i,\alpha_i,\beta_i)$ in $S$ such that $max\; u_i\rightarrow\infty$.

We will apply scaling blow-up arguments in a similar way used in the case of second order elliptic equations.

Let $p_i \in M$ be such that $u_i(p_i)=max\;u_i$. Since $M$ is compact we can assume that the sequence converges 
to some point $p\in M$. We can moreover also assume that $\alpha_i \rightarrow \alpha \in [a,b]$ and $\beta_i \rightarrow \beta \in [a,b]$.

We divide the proof in two cases: 

\vspace{.3cm}

1) When the  point $p$ is in a focal variety of $f$ 

\vspace{.3cm}

2) When $p$ is in a regular level set of $f$.

\vspace{.5cm}

Case 1. Assume for instance  that  $p\in M_0$.

Consider Fermi coordinates $\varphi=(x^1,\cdots,x^n)$ around $M_0$ centered at $p$. For $i$ large enough, let $x'_i$ be the $n-m_0$ coordinates of $p_i$ in the normal directions to $M_0$ i.e. $$\varphi(p_i)=(x^1(p_i),x^2(p_i),\cdots,x^{m_0}(p_i),x'_i).$$

We consider the functions $w_i$ defined by $$w_i((x-x'_i)/\lambda_i)=\lambda_i^{4/(q-1)}u_i(\varphi^{-1}(x)),$$

\noindent
where the constants $\lambda_i$ are defined so that $$w_i(0)=\lambda_i^{4/(q-1)}u_i(\varphi^{-1}(x'_i))=\lambda_i^{4/(q-1)}u_i(p_i)=1.$$

Note that we have that $\lambda_i\rightarrow 0$ and $max\;w_i=w_i(0)=1$.

Since the function  $u_i$ is $f-$invariant, there exists $\phi_i$ such that 
$$u_i(x)=\phi_i\left(\sqrt{\sum_{i=m_0+1}^n(x^i)^2}\right)$$
in the coordinates $\varphi$. In particular $u_i$  only depends of the components $(x^{m_0+1},\cdots,x^n)$ in these coordinates. Therefore, 
the function  $w_i$ is defined on the $(n-m_0)-$Euclidean ball $B(0,R/2\lambda_i)$ of center 0 and radius $R/2\lambda_i$, for some fixed $R>0$.

Since $u_i$ is a solution of Equation (\ref{Paneitz}), with $\alpha = \alpha_i$, $\beta=\beta_i$, it follows by a direct computation 
that $w_i$ satisfies the equation
$$\Delta^2_{g_i}w_i-\lambda_i^2\alpha_i\Delta_{g_i}w_i+\lambda_i^4\beta_iw_i=w_i^q,$$
where $g_i(y)=((\varphi^{-1})^*g)(\lambda_iy+x'_i)$.

Moreover each $w_i(y)$ is a $g_i-$radial function on $(B(0,R/2\lambda_i),g_i)$.

On the other hand, the condition $\alpha_i^2-4\beta_i  \geq 0$ (which in particular says that $(\lambda_i^2\alpha_i)^2-4\lambda_i^4\beta \geq 0$) implies that the function $w_i$ satisfies

$$(-\Delta_{g_i}+c_1(i)\lambda_i^2)\circ(-\Delta_{g_i}+c_2(i)\lambda_i^2)w_i=w_i^q,$$

where $$c_1(i)=\alpha_i/2+\sqrt{\alpha_i^2/4-\beta_i} >0 \quad\text{and}\quad c_2(i)=\alpha_i/2-\sqrt{\alpha_i^2/4-\beta_i} >0.$$
The coefficients in this equation are uniformly bounded and continuous in $B(0,R/{2\lambda_i})$, so we can apply 
classical regularity theory. Indeed, by $L^p$ theory of elliptic operators the functions $w_i$ are uniformly bounded in the Sobolev space $L^k_2(B(0,R'))$ for any $R'<R/(2\lambda_i)$.

Taking $k>n$, it follows from Morrey's inequality that the sequence $w_i$ is uniformly bounded in $C^{0,\gamma}(B(0,R'))$.

Therefore  elliptic regularity theory implies that $$\|w_i\|_{C^{4,\gamma}(B(0,R'))}\leq c,$$
for any $R'<R/2\lambda_i$ and  constant $c$ depending only on $R'$.

Using Arzela-Ascoli theorem and a standard diagonal argument we obtain a subsequence $w_i$ and $w\in C^4(\mathbb{R}^{n-m_0})$ such that $w_i\rightarrow w$ locally in $C^4(\mathbb{R}^{n-m_0})$.

Since $\lambda_i \rightarrow 0$, it follows that the limit function $w$ is a nonnegative solution of $$\Delta_{g_0}^2w=w^q,$$
where $g_0=\lim\limits_{i\rightarrow  \infty}(\varphi^{-1*}g)(\lambda_iy+x'_i)$. 

It then follows from the definition of Fermi coordinates that  the metric $g_0$ is the Euclidean metric in $\mathbb{R}^{n-m_0}$, and
therefore $w$ is a radial function in $\mathbb{R}^{n-m_0}$, with $w(0)=1$.

Note also that by hypothesis $q<\frac{n-{\bf m}+4}{n-{\bf m}-4}\leq\frac{n-m_0+4}{n-m_0-4}.$

But it was proved by M. Lazzo and P. G. Schmidt in Theorem 1.1 in \cite{subcritical} that  under the
previous conditions the function $w$ must be oscillatory. But this is a contradiction since we had seen that the function $w$ was non-negative.

\vspace{.5cm}

Case 2. Assume now that $p\in f^{-1}(s)$ for some regular value $s \in (t_0 , t_1 )$.

This case is a more elementary version of Case 1. Let $s_i = f(p_i )$, so $s_i \rightarrow s$. Note that we can assume that 
$s_i \in (t_0 , t_1 )$ is a regular value. Moreover, we can assume that there exists $\varepsilon >0$ such that for all $i$,
$(s_i -\varepsilon , s_i + \varepsilon ) \subset (t_0 , t_1 )$. We write $u_i = \phi_i \circ {\bf d}$ as before. 
Then $\phi_i$ is a solution of Equation 
(\ref{PaneitzODE}):

$$\phi  '''' + 2h \phi  ''' +(2h' +h^2 -\alpha ) \phi '' +(h''+hh ' -\alpha h )\phi  ' + \beta \phi  =\phi^ q ,$$ 

\noindent
where the function $h$ is smooth on $(0,D)$.

Now we consider the function 

$$\varphi_i (t) = \frac{1}{m_i} \phi \left( \frac{t-s_i}{m_i^{\frac{q-1}{4}}} \right) .$$

We consider the function $\varphi_i$ defined on $(-\varepsilon m_i , \varepsilon m_i )$. It satisfies the equation

$$\varphi  '''' + \frac{2h}{m_i^{\frac{q-1}{4}}}  \varphi  ''' +
\frac{(2h' +h^2 -\alpha_i )}{m_i^{\frac{2(q-1)}{4}}}  \varphi '' +
\frac{(h''+hh ' -\alpha_i h )}{m_i^{\frac{3(q-1)}{4}}} \varphi  ' + \frac{\beta_i}{m_i^{q-1}}  \varphi =\varphi^q,$$ 

Then the sequence $\varphi_i$ converges $C^4$-uniformly on compact sets of $\mathbb{R}$ to a nonnegative function $\varphi$ 
which verifies $\varphi (0) =1$ and satisfies the equation

$$\varphi '''' = \varphi^q .$$

It is easy to see that no such function can exist. Therefore we reach again a contradiction. Note that in this case we only
use that $q>1$.

\vspace{.5cm}

Therefore, we conclude that $S$ is a $C^0-$bounded.

Now, for any sequence $w_i$ of functions on $S$ we have a uniform $C^0$-bound for $w_i$. Again, we can split the linear part of Equation (\ref{Paneitz}) as the composition of two second order
operators: 

$$(-\Delta +c_1(i) )\circ  (-\Delta  +c_2(i)   )w_i=w_i^q,$$

\noindent
where we recall that $c_1 (i), c_2 (i) >0$. Let $z_i =  (-\Delta  +c_2(i)   )w_i$. We have that  $(-\Delta +c_1(i) ) (z_i)$ is 
uniformly $C^0$-bounded.
Then it follows from regular elliptic theory, that the sequence $z_i$ is uniformly $C^{2, \gamma}$-bounded. And then again by
regular elliptic theory it follows that the sequence $w_i$ is bounded in $C^{4,\gamma}$. 
Therefore, it converges (up to a subsequence) to a nonnegative function $W$  which is solution of Equation (\ref{Paneitz} with
$\alpha = \lim \alpha_i$, and $ \beta =\lim \beta_i$. Note that $\alpha , \beta  \in [a , b]$ , and  $\alpha^2 \geq 4\beta $. 

Note that it follows from Equation (\ref{Paneitz}) that since the functions $w_i$ are positive the maximum of $w_i$
must be at least $\beta^{\frac{1}{q-1}}$ (if the maximum is equal to this value then $w_i$ must be constant). It follows
that $W$ cannot be the constant 0.  Let 

$$c_1 =\alpha /2+\sqrt{\alpha^2/4-\beta} >0 \quad\text{and}\quad c_2 =\alpha/2-\sqrt{\alpha^2/4-\beta} >0,$$

\noindent
so that $(-\Delta +c_1 )\circ  (-\Delta  +c_2   ) W=W^q$. Then as before we see by elliptic theory that 
$ (-\Delta  +c_2   ) W \geq 0$ and then $W$ must either be strictly positive or $W \equiv 0$. 
Therefore $W$ is strictly positive and $W\in S$.

\end{proof}

\begin{remark} If ${\bf m}>0$ then the exponent $1<q<(n-{\bf m}+4)/(n-{\bf m}-4)$ can be critical and supercritical with respect to the Sobolev embedding $$L^2_2(M^n)\subset L^{2n/(n-4)}(M^n).$$

\end{remark}

\section{Mountain pass critical points}

In this section we will prove Theorem 1.2. We assume that $f$ is a proper isoparametric function 
on $(M^n ,g)$ and let ${\bf m}$ be the minimum of the dimensions of the focal submanifolds of $f$.
We consider on $M$  the  Paneitz-type equation  (\ref{Paneitz}) $$\Delta^2u-\alpha\Delta u +\beta u=u^q,$$
where $1<q< q_f = (n-{\bf m}+4)/(n-{\bf m}-4)$ and $\alpha,\beta>0$.

We call the linear part of the equation $P_{(\alpha,\beta)}$, the Paneitz operator, and we point out that it
is a coercive operator.

We will find solutions of the Paneitz-type equation in the space of $f-$invariant functions (see Section 2). We let
$S_f$ be the family of $f$-invariant functions and denote by $L^2_{2,f}(M) = S_f \cap L^2_{2}(M)$.
Also for any $r>0$ let $L^r_f (M) = S_f \cap L^r (M)$.

\medskip

For the proof of Theorem 1.2 we will use the classical min-max method of Ambrosetti-Rabinowitz  applied 
to the functional $I:L^2_{2,f}(M)\rightarrow\mathbb{R}$ defined as
$$I(u)=\frac{1}{2}\int\limits_{M}\left((\Delta u)^2 +\alpha |\nabla u|^2+\beta u^2\right)dv_g-\frac{1}{q+1}\int\limits_M |u|^{q+1}dv_g. $$

We point out that $L^2_{2,f}(M)$ is a closed subspace of $L^2_{2}(M)$, and it is $\Delta$-invariant. It follows that critical
points of $I$ are solutions of the Paneitz type equation (\ref{Paneitz}). 

\medskip

To obtain mountain pass solutions it will be fundamental that we have a compact embedding $L^2_{2,f}(M^n)\subset L_f^{q+1}(M^n)$.
In the case when $f$ is invariant by an isometric action on $(M,g)$ and the level sets of $f$ are the orbits of the action, this follows from the general
results of E. Hebey and M. Vaughon in \cite{Hebey3}. In the case of general isoparametric functions G. Henry proved in \cite{Henry} a
similar result for the inclusion of  $L^2_{1,f}(M^n)$ in $L_f^{q+1}(M^n)$ when $q<(n-{\bf m}+2 )/(n-{\bf m} -2)$.

\medskip

We first recall a few facts about isoparametric functions.
It is well-known that $M^n$ can be identify as a union of two disk bundles $D_0, D_1$, each one over $M_0$ and $M_1$ respectively. More specifically,  for $i\in\{0,1\}$ let $exp_{M_i}$ be the normal exponential map of $M_i$ in $M$. Following the notation in \cite{Miyaoka} the manifold $M^n$ is diffeomorphic to 
$$N_{\leq a}M_0\cup N_{\leq a}M_1,$$
where $$N_{\leq a}Q=\{exp_{Q}(s\eta)/\quad|\eta|=1, s<a\}\cup \{\exp_{Q}(a\eta)/\quad|\eta|=1\},$$
and $2a=d_g(M_0,M_1)$.

From the definition of a disk bundle, we can choose a coordinate system $(U_j,\varphi_j)$ on $N_{\leq a}M_0$ (also analogous one on $N_{\leq a}M_1$) such that  $U_j=\pi^{-1}(U'_j)$ for a finite cover $\{U'_j\}$ of $M_0$ and each $\varphi_j$ is a diffeomorphism defined  by $$\varphi_j:U_j\rightarrow U'_j\times D^{n-m_0}_0(a),$$ where $D^{n-m_0}_0(a)$ is the disk of radius $a$ in the normal bundle of $M_0$.

Without loss of generality, we cover $M^n$ by a finite number $m$ of these type of charts with a uniform bound for the metric tensor $g$ i.e. there exists a constant $c>1$ with $$c^{-1}I\leq g^l_{ij}\leq cI\quad\text{for}\quad l\in\{1,\cdots,m\}.$$

With the previous preliminaries we can now prove:

\begin{lemma} If $q<q_f$ 
the embedding $$L^2_{2,f}(M^n)\subset L_f^{q+1}(M^n)$$
is compact.
\end{lemma}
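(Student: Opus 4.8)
The plan is to reduce the compact embedding to a one-dimensional statement about the function $\phi$ that represents an $f$-invariant function $u = \phi \circ \mathbf{d}$, and then to a standard Rellich-type argument. First I would set up the change of variables: an $f$-invariant function $u \in L^2_{2,f}(M^n)$ corresponds to $\phi:[0,D]\to\mathbb{R}$, and using the coarea formula together with the fact that the regular level sets $\mathbf{d}^{-1}(t)$ are tubular hypersurfaces whose volume grows like $t^{n-m_0-1}$ near $M_0$ (and like $(D-t)^{n-m_1-1}$ near $M_1$), one gets that the norms $\|u\|_{L^{q+1}(M)}$, $\|u\|_{L^2(M)}$, $\|\nabla u\|_{L^2(M)}$, $\|\Delta u\|_{L^2(M)}$ are all comparable to weighted one-dimensional norms of $\phi$, $\phi'$, $\phi''$ with weight $w(t)$ behaving like $t^{n-m_0-1}$ near $0$ and $(D-t)^{n-m_1-1}$ near $D$. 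Here I would use the formula $\Delta(\phi\circ\mathbf{d}) = (\phi'' + h\phi')\circ\mathbf{d}$ already recorded in Section 2, and the fact that $h(t)$ blows up like $(n-m_0-1)/t$ near the focal variety, so that $|\Delta u|^2$ controls $|\phi''|^2$ plus a weighted $|\phi'|^2$ term modulo lower order.

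Next, using the finite atlas of disk-bundle charts $(U_j,\varphi_j)$ with uniformly bounded metric, I would realize a bounded sequence $u_k$ in $L^2_{2,f}(M^n)$ as a family of radial functions on Euclidean balls (in the normal $\mathbb{R}^{n-m_0}$ or $\mathbb{R}^{n-m_1}$ fibres), and the problem becomes: radial functions on a ball $B^{n-m_0}\subset\mathbb{R}^{n-m_0}$ bounded in $H^2_{\mathrm{rad}}$ embed compactly into $L^{q+1}_{\mathrm{rad}}$ provided $q+1 < 2(n-m_0)/(n-m_0-4)$, i.e. $q < (n-m_0+4)/(n-m_0-4)$. Since by hypothesis $q < q_f = (n-\mathbf{m}+4)/(n-\mathbf{m}-4) \le (n-m_i+4)/(n-m_i-4)$ for $i=0,1$ (using $\mathbf{m}=\min\{m_0,m_1\}$ and monotonicity of $s\mapsto (n-s+4)/(n-s-4)$), this condition holds in both focal neighborhoods, and away from the focal varieties the level-set volume is bounded below so the ordinary $L^2_2(M)\subset L^{q+1}(M)$ Rellich theorem (valid since $q+1<2n/(n-4)$ is automatic here) applies. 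One then patches: extract a subsequence converging strongly in $L^{q+1}$ on a neighborhood of $M_0$, then on a neighborhood of $M_1$, then on the compact regular region, and combine via a partition of unity subordinate to the cover $N_{\le a}M_0 \cup N_{\le a}M_1$.

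The cleanest way to run the focal-neighborhood step is to invoke the known radial Sobolev embedding: a standard computation (a radial function $\phi$ on $B\subset\mathbb{R}^d$ with $\phi(R)=0$, $\|\phi\|_{H^2}\le C$, satisfies a pointwise decay estimate $|\phi(r)| \le C r^{-(d-4)/2 + \epsilon}$ away from $0$ coming from the one-dimensional embedding of the weighted space), which upgrades weak convergence in $H^2$ to strong convergence in $L^{q+1}$ for the subcritical exponents above; alternatively one cites the compact embedding theorem for weighted Sobolev spaces on an interval with power weights, or the result of Hebey--Vaughon in \cite{Hebey3} in the homogeneous case and adapts it. I would include a short self-contained argument using the Euclidean model since the metric is uniformly equivalent to the flat one in each chart, which reduces everything to the flat radial case.

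The main obstacle, and the step deserving the most care, is the behavior near the focal varieties: one must verify that the $L^2$ bound on $\Delta u$ genuinely controls the weighted second-derivative norm of $\phi$ uniformly — the cross term $h\phi'$ in $\Delta u = \phi'' + h\phi'$ has a coefficient $h \sim (n-m_0-1)/t$ that is singular at $t=0$, so one needs the Hardy-type inequality $\int_0^{\delta} |\phi'|^2 t^{d-3}\,dt \le C \int_0^\delta |\phi''|^2 t^{d-1}\,dt + \text{(boundary/lower order)}$ on the relevant fibre dimension $d = n-m_0$ to absorb it, and likewise near $M_1$. Granting that, the compactness follows from the subcritical radial Sobolev embedding exactly as above. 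I now carry this out.

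\begin{proof}
Let $u_k$ be a bounded sequence in $L^2_{2,f}(M^n)$; we must extract a subsequence converging in $L^{q+1}(M^n)$. Write $u_k = \phi_k\circ\mathbf{d}$.

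Fix $\delta>0$ small and let $A_0 = \{\mathbf{d}<\delta\}$, $A_1=\{\mathbf{d}>D-\delta\}$, and $A_r = \{\delta/2<\mathbf{d}<D-\delta/2\}$, so that $\{A_0,A_r,A_1\}$ is an open cover of $M$; fix a subordinate partition of unity $\{\chi_0,\chi_r,\chi_1\}$.

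On $A_r$ the level sets have volume bounded above and below, so the standard Rellich theorem $L^2_2(A_r)\subset L^{q+1}(A_r)$ is compact (indeed $q+1<2n/(n-4)$ since $m_0,m_1\ge 0$ gives $q< q_f\le (n+4)/(n-4)$), and we extract a subsequence with $\chi_r u_k\to \chi_r u$ in $L^{q+1}$.

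Consider $A_0$, a tubular neighborhood of $M_0$ of radius $\delta$. By the disk-bundle structure we cover $A_0$ by finitely many charts $\varphi_j:U_j\to U_j'\times D^{n-m_0}_0(\delta)$ with metric tensors uniformly equivalent to the Euclidean product metric. In each such chart $u_k$ depends only on the radial variable $r=|y|$, $y\in D^{n-m_0}_0(\delta)$, and is given by $\phi_k(r)$. Since $g$ is uniformly equivalent to the flat metric, the bound on $\|u_k\|_{L^2_2}$ yields a uniform bound on $\|\phi_k\|_{H^2_{\mathrm{rad}}(D^{n-m_0}_0(\delta))}$ in the flat sense; writing $d_0 = n-m_0$, the flat radial $H^2$ norm is
$$
\int_0^\delta \bigl(|\phi_k''|^2 + |\phi_k'|^2 + |\phi_k|^2\bigr)\, r^{d_0-1}\, dr \le C.
$$
Here the control of $\int |\phi_k'|^2 r^{d_0-1}$ uses $|\Delta u_k|^2 = |\phi_k''+h\phi_k'|^2$ with $h(r)\sim (d_0-1)/r$ together with the Hardy inequality on $(0,\delta)$ to absorb the singular cross term. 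Since $q+1<2d_0/(d_0-4)$ (equivalently $q<(n-m_0+4)/(n-m_0-4)$, which follows from $q<q_f\le (n-m_0+4)/(n-m_0-4)$ by monotonicity of $s\mapsto(n-s+4)/(n-s-4)$ and $\mathbf{m}\le m_0$), the compact radial Sobolev embedding $H^2_{\mathrm{rad}}(D^{d_0}_0(\delta))\subset L^{q+1}_{\mathrm{rad}}(D^{d_0}_0(\delta))$ gives a subsequence with $\phi_k\to\phi$ in $L^{q+1}$ of the fibre, hence $\chi_0 u_k \to \chi_0 u$ in $L^{q+1}(A_0)$.

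The neighborhood $A_1$ of $M_1$ is treated identically with $d_1=n-m_1$ in place of $d_0$, again using $q<q_f\le (n-m_1+4)/(n-m_1-4)$.

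Combining the three subsequences (and using that finitely many extractions suffice), we obtain a single subsequence with $u_k = \chi_0 u_k+\chi_r u_k+\chi_1 u_k \to u$ in $L^{q+1}(M^n)$. Since $u$ is a limit of $f$-invariant functions it is $f$-invariant, so $u\in L^{q+1}_f(M^n)$. This proves compactness of the embedding.
\end{proof}
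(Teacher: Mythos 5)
Your overall strategy — reduce to lower-dimensional Sobolev embeddings on the Euclidean disks $D_0^{n-m_i}(a)$ near the focal varieties via the disk-bundle charts, then patch with a partition of unity — is exactly the paper's approach. But there is a concrete error in the regular-region step. You assert that $m_0,m_1\geq 0$ gives $q<q_f\leq (n+4)/(n-4)$, and use this to invoke the ordinary $n$-dimensional Rellich theorem $L^2_2(A_r)\subset L^{q+1}(A_r)$. The inequality runs the other way: since $s\mapsto (s+4)/(s-4)$ is decreasing for $s>4$ and $\mathbf{m}\geq 0$, one has $q_f=(n-\mathbf{m}+4)/(n-\mathbf{m}-4)\geq (n+4)/(n-4)$, with strict inequality whenever $\mathbf{m}>0$. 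So $q<q_f$ does \emph{not} force $q<p^*$, and the whole interest of the lemma (as the paper's Remark 2.2 emphasizes) is precisely that $q$ may be critical or supercritical for the ambient dimension $n$. For such $q$ the standard Rellich embedding on $A_r$ for non-invariant functions is not even continuous. The conclusion on $A_r$ is still salvageable — an $f$-invariant function there depends on the single variable $\mathbf{d}$, so the effective embedding is one-dimensional and compact for every finite $q$ — but your stated justification is wrong. Cleaner still is to drop $A_r$ altogether and cover $M$ by the two disk bundles $N_{\leq a}M_0\cup N_{\leq a}M_1$, as the paper does, so the dimensional reduction handles everything at once.

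Your worry about the singular cross term $h\phi'$ in $\Delta u=\phi''+h\phi'$ and the need for a Hardy inequality is a detour that the paper avoids entirely, and it leads you to write down an incomplete ``flat radial $H^2$ norm'' (you omit the $|\phi'/r|^2 r^{d_0-1}$ piece of $|\nabla^2 u|^2$). The paper's route is more direct: because $u$ depends only on the $s=n-m_0$ normal coordinates in each chart and the metric is uniformly comparable to the flat product metric, the Riemannian $L^2_2(U_l)$ norm of $u$ dominates the genuine Euclidean $L^2_2(D_0^s(a))$ norm of $u$ viewed as a function on the $s$-ball — no passage through the $\Delta u$ expression, no Hardy inequality, no separate control of $\int|\phi'|^2 r^{s-1}$ (that term is just $\int|\nabla u|^2$, already part of the $L^2_2$ norm). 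Then the ordinary compact Sobolev embedding $L^2_2(D_0^s(a))\subset L^{q+1}(D_0^s(a))$ for $q<(s+4)/(s-4)$ does all the work, with the required inequality supplied by $q<q_f\leq (n-m_i+4)/(n-m_i-4)$ exactly as you argue.
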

\begin{proof}
Let $s=n-m_0$ (recall $m_0\leq m_1$).

For any $f-$invariant function  $u$ we have that $u$ only depends of the normal coordinates on $N_{\leq a}M_0$ and $N_{\leq a}M_1$. In particular, using the previous charts $(U_j,\varphi_j)$ on $N_{\leq a}M_0$, the function $u$ only depends on $D^s_0(a)$ (similarly, $u$ only depends on $D_0^{n-m_1}(a)$ in $N_{\leq a}M_1$).

Thus we have positive constants $k$, $k_1$ such that 
\begin{eqnarray*}
\int\limits_{U_l} u^{q+1} dv_g&=\int\limits_{U'_l\times D_0^s(a)}u^{q+1}\sqrt{det g^l_{ij}}dxdy\\
&=k\int\limits_{D_0^s(a)}u^{q+1}\sqrt{det g^l_{ij}}dy\\
&\leq k_1\int\limits_{D_0^s(a)}u^{q+1}dy.
\end{eqnarray*}
It follows that there is a positive constant $k_2$ such that  $$|u|_{L^{q+1}(U_l)}\leq k_2|u|_{L^{q+1}(D_0^s(a) )}.$$
Since $g^l_{ij}$ is bounded below, we obtain that $$|u|_{L^2_2(U_l)}\geq k_3 |u|_{L^2_2(D_0^s(a))},$$

\noindent
for some positive constant $k_3$.

Since $q<(s+4)/(s-4)$ (or equivalently $\frac{1}{2}-\frac{2}{s}<\frac{1}{q+1}$) by the Sobolev inequality in $D_0^s(a)\subset\mathbb{R}^{n-m_0}$ we have that  $$|u|_{L^{q+1}(D_0^s(a)) }\leq k_4 |u|_{L^2_2(D_0^s(a))},$$

\noindent
for some positive constant $k_4$.

And therefore we have a positive constant $C(l)$ such that  $$|u|_{L^{q+1}(U_l)}\leq C(l) |u|_{L^2_2(U_l)}. $$

Then we can take  a partition of unity on $M$ subordinate to $\{U_l\}_l$ and sum up on $l$ 
to conclude that there exists a constant $C>0$ such that

\begin{equation}\label{A}
|u|_{L^{q+1}(M)}\leq C |u|_{L^2_2(M)}
\end{equation}

\noindent
for  any $u\in L^{2}_{2,f} (M)$. 

Finally, to obtain compactness of the embedding, we follow the classical argument of Kondrakov's theorem on closed manifolds. Indeed let $(u_i)$ be a bounded sequence of functions in $L^2_{2,f}(M^n)$. Assume that $(\delta_l)$ is a partition of unity on $M$ subordinate to $(U_l,\varphi_l)$.  It follows from compactness of 
the embedding of $L^{q+1}(D_0^s(a))$ in $L_2^2(D_0^s(a))$ that the sequence $(\delta_l u_i)_i$ has a convergent subsequence in $L^{q+1}(D_0^s(a))$. In particular $(\delta_lu_i)_i$ has a Cauchy subsequence in $L^{q+1}(U_l)$. Repeating the process for each $l$, we have that $(u_i)$ has a subsequence $\bar{u}_i$ such that 
$(\delta_l\bar{u}_i)_i$ is a Cauchy sequence in $L^{q+1}_f(M^n)$ for each $l$. Since $|\bar{u}_i-\bar{u}_j|\leq \sum\limits_{l=1}^r|\delta_l(\bar{u}_i-\bar{u}_j)|$ the sequence $(\bar{u}_i)$ is Cauchy in $L^{q+1}_f(M^n)$ and hence convergent.  
 
\end{proof}

It follows in a standard way from the previous lemma that the functional $I$ satisfies the Palais-Smale condition. 
It is also clear
that $I$ is even, $I(0)=0$.

\medskip

Since $\alpha$, $\beta$ are positive, the operator $P_{\alpha,\beta}$ is coercive and there exists a positive constant $D$ such that 

$$\int_M u P_{\alpha,\beta}(u)dv_g\geq D |u|^2_{L_{2}^2},$$

\noindent
for any $u \in L^2_2 (M)$.

Also, by (\ref{A}) we have a positive constant $E$  such that  if $u\in L^2_{2,f}$ 

$$\int_M u^{q+1}dv_g\leq E |u|^{q+1}_{L^2_{2}}.$$

Hence $$I(u)\geq \frac{D}{2} |u|^2_{L_2^2}- \frac{E}{q+1} |u|^{q+1}_{L^2_{2}} .$$

For any positive constant $\rho$ let $B(\rho )$ be the ball of radius $\rho$ centered at $0$ in  $L_{2,f}^2(M)$.
For a function $u\in L^2_{2,f}$ with $|u|_{L^2_{2}}=1$, we have that  

$$I(tu) \geq \frac{C}{2} t^2 - \frac{D}{q+1} t^{q+1}$$ 

\noindent
for any $t>0$. Since $q>1$ it follows that for $\rho >0$ small enough we have that $I(u) >0$ if $u\in B(\rho ) -\{ 0 \}$ and there is
a positive lower bound for $I$ on $\partial B(\rho )$.

\medskip

Let $E_m$ be a subspace of $L^2_{2,f}$ of  (finite) dimension $m$. Since the set of
$u\in E_m$ such that $\int_M uP_{\alpha,\beta}(u)dv_g=1$ is compact there is  a positive constant $C$ 
such that for all such $u$   $$C \leq |u|^{q+1}_{L_f^{q+1}}.$$

Then for any  $u \in E_m$ with $\int_M uP_{\alpha,\beta}(u)dv_g=1$ and for any $t>0$ we have that
$$I(tu)\leq \frac{t^2}{2}-\frac{C \  t^{q+1}}{(q+1)}.$$

It easily follows from this inequality that $E_m\cap\{I\geq0\}$ is bounded. 

\bigskip

It follows from the previous lines that we can apply the mountain pass critical points theory of
 A. Ambrosetti and P. H. Rabinowitz,  Theorem 2.13 in \cite{Ambrosetti}, to obtain:

\noindent
{\it Let 

$$\Gamma^*=\{h:L_{2,f}^2(M)\rightarrow L_{2,f}^2(M)/ \;I(h(B(1))\geq 0\;,\;h\;\text{is an odd  homeomorphism}\}.$$

\noindent
Fix an orthonormal basis $\{ e_i \}_{i\geq 1}$ of  $L^2_{2,f}$  and let $E_m = \langle e_1 , ... , e_m \rangle$.

\noindent
Then 

$$c_m=\sup\limits_{h\in\Gamma^*}\inf\limits_{u\in E_m^{\perp}\cap \partial B(1)}I(h(u)).$$

\noindent
is an increasing sequence of critical values of $I$. }

\bigskip

Therefore, there exists a sequence $u_m\in L^2_{2,f}$ of weak solutions to the Paneitz-type equation associated to the mountain pass levels $c_m$.

Following regularity results (see for instance Proposition 3 in \cite{Esposito}) the functions $u_m$ are strong solutions and belong to $C^{4,\gamma}(M)$.

\bigskip

Finally we will prove that the solutions $u_m$ have large energy i. e. the increasing sequence $c_m$ goes to infinity. The proof of this
fact follows the same original ideas used in the proof
of Theorem 3.14 in \cite{Ambrosetti} for second order equation (and  for the Paneitz equation on the round sphere \cite{Bartsch, Saintier}) .

Let $$T=\left\lbrace u\in L^2_{2,f}-\{0\}\;/\; \frac{1}{2}\int_M uP_{\alpha,\beta}(u)dv_g=\frac{1}{q+1}\int_M |u|^{q+1}dv_g \right\rbrace,$$
and $$d_m=\inf\left\lbrace\int_M uP_{\alpha,\beta}(u)dv_g/\;u\in T\cap E_m^{\perp}\right\rbrace.$$

We will first prove that $\lim_{m\rightarrow \infty} d_m = \infty$. Assume that
$(d_m)$ is bounded.  Then there exists a positive constant $d$ and  a sequence $\{ u_m \}_{m\geq 1}$ with $u_m\in T\cap E_m^{\perp}$  such that  $$\int_M u_mP_{\alpha,\beta}(u_m)dv_g\leq d,\quad \forall m.$$

Since the operator  $P_{\alpha,\beta}$ is coercive the sequence $u_m$ is bounded in $L^2_{2,f}$, and therefore 
weakly convergent to some $u\in L^2_{2,f}$. But since $u$ must be orthogonal to $E_m$ for every $m\geq 1$ it follows that we must have
$u=0$.

And since the embedding $L^2_{2,f}\subset L_f^{q+1}$ is compact by Lemma 3.1, a subsequence satisfies that
$u_m\rightarrow 0$ strongly in $L^{q+1}_f$.

It is  also proved in Lemma 3.1, (\ref{A}),  that  we have a positive constant $K$ such that 
for any $u\in L^2_{2,f} (M) - \{ 0 \}$, 
$$K<\dfrac{|u|_{L^2_{2}}}{|u|_{L^{q+1}}}.$$

It follows from the coercivity of $P_{\alpha,\beta}$ and the definition of $T$  that for all $m$, 
$$0<K\leq k_0 \dfrac{\left(\int_M u_mP_{\alpha,\beta}(u_m)dv_g\right)^{1/2}}{\left(\int_M u_mP_{\alpha,\beta}(u_m)dv_g \right)^{1/(q+1)}}.$$
Since $q+1>2$ this implies that the sequence $( \int_M u_mP_{\alpha,\beta}(u_m)dv_g )$ is bounded from 
below by a positive constant. Then it follows
from the definition of $T$ that  the sequence $|u_m|_{L_f^{q+1}}$ must be bounded below by a positive constant, 
which is a contradiction. 

\bigskip

Therefore $d_m\rightarrow\infty$.

\vspace{.5cm}

Pick constants $C_1 , C_2 >0$ such that for all $u\in L_2^2 (M)$,

$$C_1 |u|_{L^2_{2}} \leq    \int_M uP_{\alpha,\beta}(u)dv_g      \leq    C_2      |u|_{L^2_{2}} .$$

Pick also a constant $C>\sqrt{C_2}$.

For any $u\in E_m^{\perp}\cap \partial B(1)$ we have that 
$$I\left( \frac{\sqrt{d_m}}{C} u \right) =\frac{d_m}{2C^2}\int_M uP_{\alpha,\beta}(u)dv_g-\frac{1}{q+1}\cdot\left({\frac{\sqrt{d_m}}{C}}\right)^{q+1}|u|^{q+1}_{L^{q+1}}.$$

Note also that for each $u\in E_m^{\perp}\cap \partial B(1)$ there exists a unique positive number $a= a(u)$ such that $a(u)u\in T$. Explicitly

$$a=\left(    \frac{(q+1)\int_M uP_{\alpha , \beta} (u)dv_g }{ 2 \int_M |u|^{q+1}dv_g     }           \right)^{\frac{1}{q-1}}.$$

\bigskip

Note that $d_m \leq a^2(u) \int_M uP_{\alpha,\beta}(u)dv_g \leq a^2 (u) C_2 $.

\medskip

We have that $$\frac{1}{2}\int_M auP_{\alpha,\beta}(au)dv_g=\frac{a^{q+1}}{q+1}\int_M |u|^{q+1}dv_g.$$

Therefore,
\begin{align*}
I\left( \frac{\sqrt{d_m}}{C} u \right) =&\frac{d_m}{2C^2}\int_M uP_{\alpha,\beta}(u)dv_g-\left({\frac{\sqrt{d_m}}{C}}\right)^{q+1}\frac{a^2}{2a^{q+1}}\int_M uP_{\alpha,\beta}(u)dv_g\\
=&\left(\frac{d_m}{C^2}-\left(\frac{\sqrt{d_m}}{C}\right)^{q+1}\frac{1}{a^{q-1}}\right)  
\frac{1}{2}\int_M uP_{\alpha,\beta}(u)dv_g\\
=&\frac{d_m}{2 C^2}\left(1-\left(\frac{\sqrt{d_m}}{Ca}\right)^{q-1}\right) \int_M uP_{\alpha,\beta}(u)dv_g\\
\geq& \frac{d_m}{2 C^2}\left(1-(\sqrt{C_2}/C)^{q-1}\right) \int_M uP_{\alpha,\beta}(u)dv_g.
\end{align*}

And since we had picked $C>\sqrt{C_2 }$, it follows that 

\begin{equation}\label{B}
I\left(   \frac{\sqrt{d_m}}{C} u \right)   \geq kd_m^2,
\end{equation}

\noindent
for some constant $k>0$.

\bigskip

Now we prove:

\begin{lemma}
For each $m$ there exists $h_m\in \Gamma^*$ such that $h_m(u)=\frac{\sqrt{d_m}}{C}u$  for all $u\in E_m^{\perp}$.
\end{lemma}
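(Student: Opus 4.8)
The plan is to construct $h_m$ explicitly as a composition of radial rescalings adapted to the orthogonal splitting $L^2_{2,f}(M) = E_m \oplus E_m^\perp$. First I would write any $u \in L^2_{2,f}$ uniquely as $u = v + w$ with $v \in E_m$ and $w \in E_m^\perp$, and define a map $h_m$ that acts by the dilation factor $\sqrt{d_m}/C$ on the $E_m^\perp$-component while acting as the identity (or some fixed homeomorphism) on the $E_m$-component. The naive choice $h_m(v+w) = v + (\sqrt{d_m}/C) w$ is a linear odd isomorphism, hence an odd homeomorphism, and satisfies $h_m(u) = (\sqrt{d_m}/C)u$ for $u \in E_m^\perp$ as required; so the only real content is to arrange, perhaps after modifying $h_m$, that $I \circ h_m \geq 0$ on the unit ball $B(1)$.

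The second step addresses the sign condition $I(h_m(B(1))) \geq 0$. Since $E_m$ is finite-dimensional and $E_m \cap \{I \geq 0\}$ is bounded (established earlier in the excerpt), there is a radius $R_m$ so that $I < 0$ on $E_m$ outside $B(R_m) \cap E_m$; on the other hand $I$ is bounded above on bounded sets. The standard Ambrosetti–Rabinowitz device is to let $h_m$ contract the $E_m$-direction by a large factor and expand the $E_m^\perp$-direction: concretely one takes $h_m(v+w) = t_m v + s_m w$ with $s_m = \sqrt{d_m}/C$ and $t_m>0$ chosen small enough that $t_m(B(1)\cap E_m) \subset B(\rho)\cap E_m$, where $\rho$ is the radius from the mountain pass geometry on which $I \geq 0$ near $0$. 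One then checks that for $u = v+w \in B(1)$, writing $h_m(u) = t_m v + s_m w$, the energy $I(h_m(u))$ is non-negative: the quadratic part dominates on the contracted finite-dimensional piece, and on the $E_m^\perp$-piece the computation leading to (\ref{B}) (which only used $u \in E_m^\perp \cap \partial B(1)$ and homogeneity) gives positivity; the cross terms are controlled because $P_{\alpha,\beta}$ is diagonal with respect to the splitting and the $L^{q+1}$-norm of a sum is estimated by convexity. The main obstacle is precisely this last verification — ensuring $I(h_m(u)) \geq 0$ uniformly over all $u \in B(1)$, not just on the two coordinate subspaces — since the nonlinear term $-\frac{1}{q+1}\int |v+w|^{q+1}$ couples the components; one resolves it by choosing $t_m$ small depending on $d_m$ so that the $E_m$-contribution stays inside the region where $I\geq 0$, and by absorbing mixed terms into the (large, positive) leading term $\sim d_m \int u P_{\alpha,\beta}(u)$ coming from the dilated $E_m^\perp$-part.

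Finally I would note that $h_m$ so constructed is a linear map of $L^2_{2,f}(M)$ to itself which is invertible (both $t_m, s_m > 0$), hence a homeomorphism, and odd since it is linear; thus $h_m \in \Gamma^*$. Combined with (\ref{B}), this immediately yields $c_m \geq \inf_{u \in E_m^\perp \cap \partial B(1)} I(h_m(u)) \geq k\, d_m^2$, and since $d_m \to \infty$ we conclude $c_m \to \infty$, which is the energy-unboundedness statement of Theorem 1.2.
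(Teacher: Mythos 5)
Your construction of $h_m$ is exactly the paper's: $h_m(v+w)=t_m v+s_m w$ with $s_m=\sqrt{d_m}/C$ and $t_m>0$ small, acting on the orthogonal splitting $E_m\oplus E_m^{\perp}$. You correctly observe that $h_m$ is an odd linear homeomorphism, and you correctly identify the only nontrivial point, namely that $I\geq 0$ on $h_m(B(1))$. Where you diverge from the paper is in how this is verified, and that is where your sketch has real gaps.

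The paper proves $I\circ h_m\geq 0$ on $B(1)$ by contradiction and compactness: if it failed for every small $\varepsilon$, one would get $\varepsilon_i\to 0$ and $u_i=\varepsilon_i v_i+s_m w_i$ with $I(u_i)<0$; by weak compactness in $L^2_{2,f}$ and strong compactness of the embedding into $L^{q+1}_f$ (Lemma 3.1) one passes to a limit $u_i\to s_m w_0$ in $L^{q+1}$ with $w_0\in E_m^{\perp}\cap \overline{B(1)}$, and weak lower semicontinuity of the quadratic form gives $I(s_m w_0)\leq 0$; this contradicts (the homogeneous extension of) estimate~(\ref{B}) when $w_0\neq 0$, and the mountain-pass geometry near $0$ when $w_0=0$. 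Your proposed direct estimate, by contrast, has two unaddressed problems. First, you assert ``$P_{\alpha,\beta}$ is diagonal with respect to the splitting,'' but the basis $\{e_i\}$ is only $L^2_2$-orthonormal, not assumed to be a $P_{\alpha,\beta}$-eigenbasis, so the quadratic cross term $\int v P_{\alpha,\beta}w$ need not vanish; it can be controlled by Cauchy--Schwarz, but that must be said. Second and more seriously, your plan to ``absorb mixed terms into the large positive leading term $\sim d_m\int u P u$ from the dilated $E_m^{\perp}$-part'' fails precisely when the $E_m^{\perp}$-component $w$ is small in norm: then the putative leading term is $\sim d_m|w|^2$, which is not large at all, while the cross terms $t_m s_m\int vP w$ and the nonlinear coupling need not be negligible relative to it. Your idea of shrinking $t_m$ so that $t_m(B(1)\cap E_m)\subset B(\rho)$ controls $I(t_m v)$ but not $I(t_m v+s_m w)$; to make a direct argument work one would have to split into cases ($|w|\geq\delta$ vs.\ $|w|<\delta$ with $\delta$ chosen so that $s_m\delta<\rho/2$) and estimate each regime separately, or else use the compactness argument the paper uses. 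As stated, the step you yourself flag as ``the main obstacle'' is not resolved.

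Your closing remark — that the lemma combined with~(\ref{B}) gives $c_m\geq k d_m^2\to\infty$ — is correct and is how the paper concludes Theorem~1.2.
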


\begin{proof} Fix $m$ and for any $u\in L_{2,f}^2 (M)$ write $u=u_1 +u_2$ with $u_1 \in E_m$ and 
$u_2 \in E_m^{\perp}$. Define $h_m (u) = \varepsilon u_1 + \frac{\sqrt{d_m}}{C}u_2$.

\medskip

We will prove that if $\varepsilon >0$ is small enough,  then $h_m \in \Gamma^*$. Clearly $h_m$ is an odd homeomorphism
of $L_{2,f}^{2} (M)$. Therefore we need to prove that  for $\varepsilon >0$ small enough $I(h_m (B(1)) \geq 0$.
This is equivalent to saying that  for $\varepsilon >0$ small enough

$$Z_{\varepsilon} =\varepsilon (E_m  \cap B(1) )+ \frac{\sqrt{d_m}}{C} (E_m^{\perp} \cap B(1) )  \subset
\{I>0 \} \cup \{ 0\} .$$

Assume it is not true. Then there exists a sequence $\varepsilon_i >0$, $\varepsilon_i \rightarrow 0$, and a sequence
$u_i = \varepsilon_i v_i +  \frac{\sqrt{d_m}}{C} w_i$ with $v_i \in E_m  \cap B(1)$, 
$w_i \in E_m^{\perp} \cap B(1)$ such that $I(u_i )<0$.
Both sequences $v_i$ and $w_i$ are bounded in $L_{2,f}^2$ and therefore we can assume that they are
weakly convergent in $L_{2,f}^2$ and strongly convergent in $L^{q+1} (M)$ to $v_0$, $w_0$ respectively. 
It follows that $u_i$ converges to $\frac{\sqrt{d_m}}{C} w_0$, and therefore
$I(\frac{\sqrt{d_m}}{C} w_0 )\leq 0$. But this contradicts (\ref{B}).

\end{proof}

Since $d_m \rightarrow \infty$  it follows from (\ref{B}) that $$\inf\limits_{u\in E_m^{\perp}\cap \partial B(1)}I(h_m(u))\rightarrow\infty\quad\text{as}\quad m\rightarrow\infty.$$

\bigskip
This implies that $c_m\rightarrow\infty$. Note also that for a solution of Equation (\ref{Paneitz}) we
have that 

$$ \int\limits_{M}\left((\Delta u)^2 +\alpha |\nabla u|^2+\beta u^2\right)dv_g =
\int\limits_M |u|^{q+1}dv_g. $$

Therefore 

$$I(u)= \left( \frac{1}{2}  -\frac{1}{q+1} \right) \int\limits_M |u|^{q+1}dv_g. $$

Then if $u_m$ is a solution with $I(u_m ) = c_m$ then $E(u_m ) \rightarrow \infty$, 
which finishes the proof of Theorem 1.2.

\end{document}